\DeclareFontFamily{OT1}{pzc}{}
\DeclareFontShape{OT1}{pzc}{m}{it}{<-> s * [1.10] pzcmi7t}{}
\DeclareMathAlphabet{\mathpzc}{OT1}{pzc}{m}{it}
\theoremstyle{plain}
\newtheorem{thm}{Theorem}[section]
\newtheorem{lem}[thm]{Lemma}
\newtheorem{prop}[thm]{Proposition}
\newtheorem{conj}[thm]{Conjecture}
\begin{document}

\pagenumbering{arabic} \setcounter{page}{1}

\title{Sum of dilates in vector spaces}

\author{Antal Balog\thanks{The first author's research was supported by the Hungarian National Science Foundation Grants K81658 and K104183.} \\ 
Alfr\'ed R\'enyi Institute of Mathematics \\
Budapest, P.O.Box 127, 1364--Hungary \\
balog@renyi.mta.hu \\
\and
George Shakan\thanks{Research conducted while the author enjoyed the hospitality of the Alfr\'{e}d R\'{e}nyi 
Institute of Mathematics, and benefited from the OTKA grant K109789.}\\
UIUC Department of Mathematics \\
Urbana, IL 61801, USA \\
shakan2@illinois.edu}

\maketitle


\begin{abstract}
Let $d\geq 2$, $A \subset \mathbb{Z}^d$ be finite and not contained in a translate of any hyperplane, and $q \in \mathbb{Z}$ such that $|q| > 1$. We show $$|A+ q \cdot A| \geq (|q|+d+1)|A| - O_{q,d}(1).$$ 
\end{abstract}

\section{Introduction}

Let $A$ and $B$ be finite sets of real numbers. The sumset and the product set of $A$ and $B$ are defined by $$A + B = \{a + b : a \in A, \ b \in B \},$$
$$A \cdot B = \{ab : a \in A, \ b \in B\}.$$ For a real number $d \neq 0$ the dilation of $A$ by $d$ is defined by 
$$d \cdot A = \{d\} \cdot A =  \{da : a \in A\},$$ while for any real number $x$, the translation of $A$ by $x$ is defined by 
$$x + A = \{x\} + A = \{x + a : a \in A\}.$$The following (actually more)  was shown in \cite{Ba}.

\begin{thm}\label{Ba}\cite{Ba} Let $q \in \mathbb{Z}$. Then there is a constant $C_q$ such that every finite $A \subset \mathbb{Z}$ satisfies
\begin{equation}\label{1}|A+ q \cdot A| \geq (|q|+1) |A| - C_q,\end{equation}
\end{thm}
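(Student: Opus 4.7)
The plan is to establish the bound by decomposing $A + qA$ into residue classes modulo $q$, handling the case when $A$ meets every residue directly, and extracting the missing factor via a Freiman-type stability argument when it does not.

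\textbf{Reductions.} Replacing $A$ by $-A$ if necessary, we may assume $q \geq 2$; the case $q = \pm 1$ is Freiman's classical $|A + A| \geq 2|A| - 1$. Translating $A$ so $\min A = 0$ and dividing out $\gcd(A)$ (which rescales the sumset but preserves the inequality to be proved) let us further assume $\min A = 0$ and $\gcd(A) = 1$. We may also assume $|A|$ is large compared with $q$, since otherwise the inequality holds trivially by inflating $C_q$.

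\textbf{Residue partition.} Let $k$ be the number of residues of $A$ modulo $q$. For each residue $r$ met by $A$, put $A_r = A \cap (r + q\mathbb{Z})$ and $A_r' = (A_r - r)/q$. A sum $a + qb$ lies in the class $r$ modulo $q$ iff $a \in A_r$, and the set of such sums equals $r + q(A_r' + A)$. Summing over the disjoint residue classes,
$$|A + qA| = \sum_{r} |A_r' + A| \geq \sum_{r} (|A_r'| + |A| - 1) = (k+1)|A| - k,$$
using $|X + Y| \geq |X| + |Y| - 1$ in $\mathbb{Z}$. When $k = q$ this is the target bound with $C_q = q$.

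\textbf{Deficient case $k < q$.} Here the partition alone loses a factor of $|A|$. The plan is to exploit the equality case of $|X + Y| = |X| + |Y| - 1$, which forces $X$ and $Y$ to be APs with a common difference. If \emph{every} term $|A_r' + A|$ were extremal, then $A$ and each $A_r'$ would be APs with a common difference, and combined with $\gcd(A) = 1$ and $\min A = 0$ this would force $A = \{0, 1, \ldots, |A| - 1\}$; but such $A$ has $k = \min(q, |A|) = q$ once $|A| \geq q$, contradicting $k < q$. So at least one summand strictly exceeds the elementary bound, and a quantitative stability version should recover the missing $|A|$ up to an $O_q(1)$ error absorbed into $C_q$. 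The technical ingredients are Plünnecke--Ruzsa applied to the injective dilation $x \mapsto qx$ (to bound $|A + A|$ in terms of $|A + qA|$) and Freiman's $3k-4$ theorem (to place $A$ inside a short AP and count within it).

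The main obstacle is making this stability step quantitative: ensuring the slack $(q - k)|A|$ cannot be hidden inside a single oversized $|A_r' + A|$ without forcing enough arithmetic structure on $A$ to produce additional residue classes. The natural way to close this gap is an iterative argument that either gains residues at each step or reduces to a strictly smaller set via $A = r + qA_0$ within a single residue class, using that the map $A \mapsto A_0$ strictly shrinks $\max A$ so the iteration terminates in $O(\log \max A)$ rounds with only a bounded accumulation of error into $C_q$.
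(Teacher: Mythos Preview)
The paper does not prove this theorem; it is quoted from the authors' earlier work \cite{Ba} and used here as a black box (for instance in Lemma~\ref{special} and Lemma~\ref{f}). So there is no proof in the present paper to compare against directly. That said, the higher-dimensional Lemma~\ref{dist} and Proposition~\ref{proof} here are explicitly modelled on the one-dimensional argument of \cite{Ba}, so its shape can be read off.

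Your residue decomposition $|A+qA| = \sum_r |A_r' + A|$ and the bound $(k+1)|A|-k$ are correct and match the first step in \cite{Ba}. The gap is the deficient case $2 \leq k < q$. You observe that if every summand $|A_r'+A|$ were exactly $|A_r'|+|A|-1$ then $A$ would be an arithmetic progression and hence $k=q$; but this only forces \emph{some} summand to exceed the trivial bound by at least $1$, whereas you need a total excess of $(q-k)|A|$. The ``quantitative stability'' you invoke is never carried out, and the iterative reduction you sketch at the end --- replacing $A$ by $A_0$ where $A = r + qA_0$ lies in a single residue class --- only applies when $k=1$, which your own normalization $\gcd(A)=1$, $\min A = 0$ already rules out. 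For $2\le k\le q-1$ your proposal has no mechanism to proceed, and neither Freiman's $3k-4$ theorem nor Pl\"unnecke--Ruzsa is deployed in a way that produces the missing $(q-k)|A|$.

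For contrast, the device actually used in \cite{Ba} is the one-dimensional precursor of Lemma~\ref{dist}: for each residue piece $A_i$, either $A_i'$ is itself fully distributed modulo $q$ (so $|A_i+qA_i|$ is already large), or one gains a bonus $|A_i+qA| \ge |A_i+qA_i| + \min_w |A_w|$. This dichotomy feeds a bootstrap (as in Proposition~\ref{proof}) that raises the multiplicative constant step by step from $1$ to $q+1$, in a number of steps depending only on $q$, so the accumulated error is genuinely $O_q(1)$. Your sketch is missing this dichotomy lemma and the bootstrap that exploits it.
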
 This was obtained after the works of \cite{Bu, Ci, Du, Ha, Lj}. The reader is invited to see the introductions of \cite{Ba} and \cite{Bu} for a more detailed introduction to this problem. 

 For a finite $A \subset \mathbb{Z}^d$, we say the {\em rank} of $A$ is the smallest dimension of an affine space that contains $A$. When $A$ is a set of high rank, one might expect to be able to improve the lower bound in \eqref{1}, which is the goal of our current note. Ruzsa proved the following in \cite{Ru}. \begin{thm}\label{Ru}\cite{Ru} Let $A,B \subset \mathbb{Z}^d$ be finite such that $A+B$ has rank $d$ and $|A| \geq |B|$. Then $$|A+B| \geq |A| +d|B| -  \frac{d(d+1)}{2}.$$ \end{thm}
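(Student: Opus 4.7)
The plan is to induct on the ambient dimension $d$. The base case $d=1$ is the classical Freiman lower bound $|A+B|\geq|A|+|B|-1$, which matches the claim exactly. For the inductive step $d\geq 2$, take $A,B\subset\mathbb{Z}^d$ with $\operatorname{rank}(A+B)=d$ and $|A|\geq|B|$; the case $|B|=1$ is immediate (the target reduces to $|A|\geq|A|-d(d-1)/2$), so assume $|B|\geq 2$ and hence $\operatorname{rank}(B)\geq 1$.

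I would pick a nonzero linear functional $\phi\colon\mathbb{Z}^d\to\mathbb{Z}$ to slice by, satisfying (i) $|\phi(B)|\geq 2$ (nontrivial slicing of $B$), and (ii) some extremal $\phi$-slice of $A$ or $B$ attains the maximal rank $d-1$ in its hyperplane. Such $\phi$ exists using $\operatorname{rank}(A+B)=d$, which forces $\operatorname{rank}(A)+\operatorname{rank}(B)\geq d$: when $\operatorname{rank}(B)=d$, taking $\phi$ vanishing on $d$ affinely independent points of $B$ forces the bottom slice $B_{j_1}$ to have rank $d-1$, with symmetric constructions (favoring $A$, or favoring the larger-rank factor) in the other cases. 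With $\phi(A)=\{i_1<\cdots<i_s\}$, $\phi(B)=\{j_1<\cdots<j_t\}$, and slices $A_r=A\cap\phi^{-1}(i_r)$, $B_\ell=B\cap\phi^{-1}(j_\ell)$ identified with subsets of hyperplanes $\cong\mathbb{Z}^{d-1}$, the $s+t-1$ distinct $\phi$-levels $i_r+j_1$ (for $r=1,\ldots,s$) and $i_s+j_\ell$ (for $\ell=2,\ldots,t$) of $A+B$ deliver the slicing inequality
$$|A+B|\geq\sum_{r=1}^s|A_r+B_1|+\sum_{\ell=2}^t|A_s+B_\ell|.$$
For the summands whose sumset in $\mathbb{Z}^{d-1}$ achieves rank $d-1$ (guaranteed at least for those involving the chosen high-rank extremal slice), I invoke the inductive hypothesis in $\mathbb{Z}^{d-1}$, comparing the two factor sizes to orient the inequality so the larger factor appears with coefficient $1$ and the smaller with coefficient $d-1$. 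For the remaining low-rank summands I fall back on the universal bound $|X+Y|\geq|X|+|Y|-1$.

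The main obstacles are twofold. First, exhibiting $\phi$ with property (ii) in every configuration—particularly when neither $A$ nor $B$ individually attains rank $d$—requires a careful case split exploiting $\operatorname{rank}(A)+\operatorname{rank}(B)\geq d$ and $|A|\geq|B|$, possibly arranging $s=1$ so that $A_s=A$ carries its own rank. Second, the constants must add to exactly $d(d+1)/2=(d-1)d/2+d$: the induction step supplies the $(d-1)d/2$ loss, while the remaining $d$ has to be absorbed by combining the $s+t-1$ Freiman baseline penalties with the bonus terms $(s-1)|B_1|$ and $(t-1)|A_s|$ coming from the multiplicity of slices. Both are essentially mechanical once the structural choice of $\phi$ has been locked in, and I expect the step where $\phi$ is constructed to be the true heart of the proof.
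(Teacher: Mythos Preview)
The paper does not prove this statement. Theorem~\ref{Ru} is quoted from Ruzsa's 1994 \emph{Combinatorica} paper \cite{Ru} and used throughout as a black box (in Lemma~\ref{FD}, in the proof of Theorem~\ref{q=2}, and repeatedly in Proposition~\ref{proof}). There is therefore no in-paper argument to compare your proposal against.

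For what it is worth, induction on $d$ via hyperplane slicing is indeed the standard route to this inequality. Two points in your sketch deserve more care, however. First, choosing $\phi$ to vanish on $d$ affinely independent points of $B$ does not by itself place those points in the \emph{extremal} slice $B_{j_1}$; for that you need the $d$ points to span a facet of $\mathrm{conv}(B)$, so that the hyperplane $\{\phi=0\}$ actually supports $\mathrm{conv}(B)$. Second, the bookkeeping you call ``essentially mechanical'' is where the real work lies: invoking the $(d{-}1)$-dimensional hypothesis on every slice along one leg of the staircase accumulates an error of order $s\cdot\tfrac{(d-1)d}{2}$ (respectively $t\cdot\tfrac{(d-1)d}{2}$), which for large $s$ or $t$ swamps the target constant $\tfrac{d(d+1)}{2}$, whereas invoking it at a single slice does not by itself manufacture the coefficient $d$ in front of $|B|$. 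Your outline has not yet pinned down the choice of $\phi$ together with the allocation of inductive versus trivial bounds that makes these constants close, and that allocation --- rather than the mere existence of a good $\phi$ --- is the heart of the proof.
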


Let $A \subset \mathbb{Z}^d$ be finite of rank $d$ and $q$ be an  integer. The main objective here is to improve upon \eqref{1} and Theorem \ref{Ru} in the case $B = q \cdot A$. In this note $O(1)$ will always depend on the relevant $d$ and $q$. Our main theorem is the following.

\begin{thm}\label{main}
Let $A \subset \mathbb{Z}^d$ of rank $d\geq 2$ and $|q| > 1$ be an integer. Then $$|A+q \cdot A| \geq (|q|+d+1)|A| - O(1).$$
\end{thm}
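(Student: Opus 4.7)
I would prove Theorem \ref{main} by induction on the dimension $d\ge 2$, combining the one-dimensional dilate bound of Theorem \ref{Ba} (responsible for the $|q|+1$ factor) with a hyperplane-slicing argument in the spirit of the proof of Theorem \ref{Ru} (responsible for the extra $d$). By translation invariance we may assume $0\in A$ throughout; set $n=|A|$.

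For the inductive step $d\ge 3$, choose an affine hyperplane $H\subset\mathbb{R}^{d}$ such that $A\cap H$ has rank $d-1$ and $|A\cap H|$ is as large as possible; such $H$ exists because $A$ has rank $d$. After an affine change of coordinates take $H=\{x_d=0\}$, and set $A_0:=A\cap H$ and $A_1:=A\setminus H$, which is nonempty since $\mathrm{rank}\,A=d$. The key structural observation is that $A_0+q\cdot A_0$ lies in $\{x_d=0\}$ while $A_0+q\cdot A_1$ and $A_1+q\cdot A_0$ lie in $\{x_d\ne0\}$; hence $A_0+q\cdot A_0$ is disjoint from both. The inductive hypothesis applied to $A_0$ inside $H\cap\mathbb{Z}^{d}\cong\mathbb{Z}^{d-1}$ gives $|A_0+q\cdot A_0|\ge(|q|+d)|A_0|-O(1)$, and the remaining task is to show that the off-hyperplane sums $(A_0+q\cdot A_1)\cup(A_1+q\cdot A_0)\cup(A_1+q\cdot A_1)$ contribute at least $|A_0|+(|q|+d+1)|A_1|-O(1)$ additional elements of $A+q\cdot A$.

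For the base case $d=2$, I would pick an integer linear form $\phi:\mathbb{Z}^{2}\to\mathbb{Z}$ that is injective on $A$ and set $X:=\phi(A)$, so $|X|=n$. Since $\phi(A+q\cdot A)=X+qX$, Theorem \ref{Ba} applied to $X$ gives the base estimate
\[
|A+q\cdot A|\;\ge\;|X+qX|\;\ge\;(|q|+1)n-C_q.
\]
To pick up the extra $2n-O(1)$ elements, I would use the rank-$2$ hypothesis to select $v_1,v_2\in A$ with $\{0,v_1,v_2\}$ affinely independent, and compute the fibre multiplicity $|A+q\cdot A|-|X+qX|$, which counts pairs $(u,v)\in(A-A)^{2}$ with $u-qv\in\ker\phi\setminus\{0\}$ and $u\ne qv$. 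A careful (non-generic) choice of $\phi$ so that $\ker\phi$ is aligned with the short non-trivial lattice relations among $v_1,v_2$ and $A-A$ should force at least $2n-O(1)$ such collisions, delivering $(|q|+3)n-O(1)$ in total.

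The main obstacle is twofold. In the inductive step, when $|A_0|$ is small (so that $A_1$ carries most of the mass of $A$) the inductive contribution is essentially $O(1)$ and one must extract nearly all of the $(|q|+d+1)n$ target from the off-hyperplane sums; handling this regime will likely require either a supplementary induction on $|A|$, a case split on the ratio $|A_0|/|A|$, or an independent appeal to Theorem \ref{Ba} along a direction transverse to $H$. In the base case $d=2$ there is no further dimensional reduction, and the crux is to choose $\phi$ so that it is simultaneously injective on $A$ (to make the base estimate work) and has a kernel rich enough to create the extra fibre multiplicities - a delicate balance that I expect to be the hardest single step of the argument.
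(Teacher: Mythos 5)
Your proposal is a plan with two load-bearing steps that are not carried out, and both are genuine gaps rather than routine details. First, the base case $d=2$: the projection step $|A+q\cdot A|\ge|X+qX|\ge(|q|+1)n-C_q$ is fine, but the entire content of the theorem in dimension $2$ is the extra $2n-O(1)$, and for that you offer only the hope that a ``careful (non-generic) choice of $\phi$'' will force $2n-O(1)$ fibre collisions. No mechanism is given for producing these collisions, and none is apparent: lower-bounding the fibre multiplicity $|A+q\cdot A|-|X+qX|$ is essentially equivalent to the problem you are trying to solve, and for a structured near-one-dimensional $A$ the total number of available collisions is exactly $2n+O(1)$, so there is no slack. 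Second, the inductive step collapses precisely in the regime you flag: if $A$ is in general position (say on a moment curve), \emph{every} hyperplane meets $A$ in at most $d$ points, so $|A_0|=O(1)$, the inductive contribution is $O(1)$, and you must extract all of $(|q|+d+1)|A_1|-O(1)$ from the off-hyperplane sums with no tool in hand (projecting onto the $x_d$-coordinate fails when $A_1$ concentrates on few levels). Listing possible remedies (``a supplementary induction on $|A|$, a case split, \dots'') is not the same as having one.

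The missing idea is the one the paper is built around: since $|q|>1$, one can partition $A$ into its intersections $A_i$ with residue classes modulo $q\cdot\mathbb{Z}^d$, and the sets $A_i+q\cdot A$ are automatically pairwise disjoint, which is what lets the dilation structure (and not just the dimension) enter the count. The paper combines this with a reduction normalizing the lattice generated by $A-a$, a lemma showing a reduced set meets at least $d+1$ such classes, a separate treatment of the degenerate case where $A$ lies in $d$ parallel lines (where Theorem \ref{Ba} is applied line by line), and an iterative bootstrap that improves the multiplicative constant from $1$ to $|q|+d+1$ in increments of $\tfrac{1}{|q|+d+1}$. Your hyperplane slicing mirrors the proof of Theorem \ref{Ru} and can only be expected to produce the ``$+d$'' part of the bound; without a substitute for the mod-$q$ decomposition, neither your base case nor the small-$|A_0|$ regime of your induction goes through.
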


The authors would like to thank Imre Ruzsa for drawing our attention to the current problem. We remark that we do not believe even the multiplicative constant of $(|q|+d+1)$ is the best possible, and we now present our best construction.  For $1 \leq i \leq d$, let $e_i$ be the standard basis vectors of $\mathbb Z^d$. For $N \in \mathbb{Z}$, consider $$A_N = \{e_1 , \ldots , e_d\} \cup \{n e_1 : 0 < n < N  \ , \ n \in \mathbb{Z} \}.$$ It is easy to see that \begin{equation}\label{example} |A_N + q \cdot A_N| \leq (q+2d-1)|A_N| - (d-1)( |q| - 2(d-1) + 1) \end{equation} This shows that Theorem \ref{main} is the best possible up to the additive constant for $d=2$. We are also able to handle the case $d = 3$.

\begin{thm}\label{d=3}
Let $A \subset \mathbb{Z}^3$ be finite of rank $3$ and $|q| > 1$. Then $$|A+q \cdot A| \geq (|q|+5)|A| - O(1).$$
\end{thm}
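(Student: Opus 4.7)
Following the extremal example \eqref{example}, the tight case of the bound $(|q|+5)|A|$ in Theorem~\ref{d=3} is when $A$ is essentially a line together with two off-line points. The plan is therefore to bootstrap Theorem~\ref{main} from $d=2$ via a projection/slicing argument, identifying two distinct sources of improvement over the 1D bound of Theorem~\ref{Ba}: genuine $2$-dimensional spread in a coordinate plane (gaining $2$), and extra extent in the third, transverse direction (gaining another $2$).

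Concretely, I would pick a nonzero direction $v \in \mathbb{Z}^3$ to be optimized, and let $\pi_v : \mathbb{Z}^3 \to \mathbb{Z}^2$ be the projection with kernel $\mathbb{R}v$. Because $A$ has rank $3$, the image $\bar A := \pi_v(A)$ has rank $2$ in $\mathbb{Z}^2$ for every $v$, and Theorem~\ref{main} applied in dimension $2$ yields $|\bar A + q\cdot \bar A| \geq (|q|+3)|\bar A| - O(1)$, handling the ``horizontal'' count. Writing $F_{\bar a} := \pi_v^{-1}(\bar a) \cap A$ and $c_{\bar a} := |F_{\bar a}|$, so that $\sum_{\bar a \in \bar A} c_{\bar a} = |A|$, the slice of $A + q\cdot A$ above each $\bar s = \bar a + q\bar a' \in \bar A + q\bar A$ contains the 1D sumset $F_{\bar a} + q F_{\bar a'}$, whose size is at least $(|q|+1)\min(c_{\bar a},c_{\bar a'}) - O(1)$ by Theorem~\ref{Ba}. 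I would then split into two cases: a ``deep fiber'' case where many $F_{\bar a}$ have size $\geq 2$ (so the vertical 1D contributions combine with the 2D horizontal bound to yield the extra $2|A|$), and a ``flat'' case where almost all fibers are singletons (so $\pi_v$ is essentially injective and $|A| = |\bar A| + O(1)$).

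In the deep case, the main difficulty is that multiple pairs $(\bar a, \bar a')$ can project to the same $\bar s$, so a naive sum of 1D lower bounds over all pairs overcounts; the resolution should be either a matching-like argument that assigns a single favorable pair to each $\bar s$, or an inclusion--exclusion guided by the structure of the extremal example \eqref{example}. In the flat case the projection argument alone only yields $(|q|+3)|A|$, and the remaining $2|A|$ must come from the genuine $3$-dimensional structure of $A$: in the spirit of the ``extremal point'' arguments underlying Theorems~\ref{Ru} and \ref{main}, the rank-$3$ assumption forces at least two extreme directions transverse to any plane containing a dominant subset of $A$, each supplying an extra translate of $\bar A + q\bar A$ worth $|A|-O(1)$ new sumset elements. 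I expect the combinatorial bookkeeping in the deep case to be the main obstacle, since the whole improvement from $|q|+4$ (the bound given by Theorem~\ref{main}) to $|q|+5$ must be squeezed out precisely from the overlap-counting between vertical and horizontal contributions.
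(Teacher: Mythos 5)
Your plan diverges substantially from the paper's, and it has one outright error plus two unresolved steps that are exactly where the difficulty of the theorem lives. The error: you bound the fiber sum $F_{\bar a} + q\cdot F_{\bar a'}$ from below by $(|q|+1)\min(c_{\bar a},c_{\bar a'}) - O(1)$ ``by Theorem~\ref{Ba}'', but Theorem~\ref{Ba} only applies to a sum $X + q\cdot X$ of a set with a dilate of \emph{itself}. For two different one-dimensional sets the bound fails: with $q=2$, $X=\{0,2,\dots,2n-2\}$ and $Y=\{0,1,\dots,n-1\}$ one has $|X+2\cdot Y| = 2n-1$, far below $3n - O(1)$. So only the diagonal fibers ($\bar a = \bar a'$) give you the $(|q|+1)$-type gain, and the off-diagonal fibers give only $c_{\bar a}+c_{\bar a'}-1$; this is precisely the accounting in Lemma~\ref{special}, and it is why that lemma needs all of $A$ to lie on $d$ parallel lines rather than working for a general projection. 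Beyond that, the two places you flag as ``to be resolved'' --- the overlap of pairs $(\bar a,\bar a')$ over a common $\bar s$ in the deep case, and extracting the extra $2|A|$ in the flat case --- are not minor bookkeeping: in the flat case $|\bar A + q\cdot\bar A|$ only controls the projection of $A+q\cdot A$, so the missing $2|A|$ must come from multiplicity in the fibers of $A+q\cdot A$, which requires many representations $\bar s = \bar a + q\bar a'$ and is in direct tension with $|\bar A+q\cdot\bar A|$ being small; no mechanism is given for resolving this tension.

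The paper takes a different route entirely and thereby avoids the overlap problem. It reruns the machinery of Theorem~\ref{main}: reduce $A$, decompose it into cosets of $q\cdot\mathbb{Z}^3$, use Lemma~\ref{dist} and the FD dichotomy, and iterate the multiplicative constant upward as in Proposition~\ref{proof}. The only new input needed for $d=3$ is a strengthening of the degenerate-position Lemma~\ref{special} to the two cases that can now arise: $A$ contained in two parallel planes (Lemma~\ref{previous}, handled by a rank case analysis of the two slices using Theorems~\ref{Ba}, \ref{Ru} and the $d=2$ case of Theorem~\ref{main}) and $A$ contained in four parallel lines. Only in the latter does the paper use a projection like yours, to $\mathbb{Z}^3/\ell\cong\mathbb{Z}^2$ --- but there the image has exactly $4$ points, so the collision count is explicit ($|A'+q\cdot A'|\geq 14$ of the $16$ pairs survive, and no diagonal pair is ever lost), which is what makes the fiber bookkeeping closable. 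If you want to salvage your approach, you would essentially have to re-prove that kind of statement for an arbitrary planar image $\bar A$, which is an open-ended version of the very problem you are trying to solve.
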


Furthermore, we can prove the following bound for all $q$, and this is  best possible, up to the additive constant, when $|q| = 2$. One can check the example for \eqref{example} to see that $$|A_N \pm 2 \cdot A_N| = (2d+1)|A_N| - d(d+1).$$

\begin{thm}\label{q=2}
Let $A \subset \mathbb{Z}^d$ be finite of rank $d$ and $|q| > 1$. Then $$|A+q \cdot A| \geq (2d+1)|A| - d(d+1)^2/2.$$
\end{thm}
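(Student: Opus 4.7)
The plan is to prove Theorem \ref{q=2} by induction on the rank $d$, with base case $d=1$ being $|A + q \cdot A| \geq 3|A| - 2$ for $|q|\geq 2$; this is the classical sharp dilate bound (tight on arithmetic progressions when $|q|=2$), and for $|q|\geq 3$ it follows from Theorem \ref{Ba} together with inspection of small $|A|$.

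For the inductive step, first reduce to the case where $A$ is not contained in a single coset $r + q\mathbb{Z}^d$: if it is, replace $A$ by $(A - r)/q$, which preserves $|A|$, the rank, and $|A + q \cdot A|$. After finitely many iterations, $A$ meets $N \geq 2$ residue classes modulo $q\mathbb{Z}^d$. Partition $A = \bigsqcup_{r} A^{(r)}$ with $A^{(r)} = r + q B_r$; since distinct $r$'s give disjoint cosets of $q\mathbb{Z}^d$, one has
$$|A + q \cdot A| = \sum_r |A + B_r| \geq \sum_r \bigl(|A| + d|B_r| - d(d+1)/2\bigr) = (N + d)|A| - N d(d+1)/2,$$
applying Theorem \ref{Ru} to each summand (valid because $A + B_r$ contains a translate of $A$ and hence has rank $d$). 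Rearranging, $(N+d)|A| - N d(d+1)/2 \geq (2d+1)|A| - d(d+1)^2/2$ exactly when $(N - d - 1)(|A| - d(d+1)/2) \geq 0$, so whenever $N \geq d+1$ or $|A| \leq d(d+1)/2$ the partition bound already suffices.

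The remaining case is $2 \leq N \leq d$ together with $|A| > d(d+1)/2$. In this regime the $N$ residues lie inside an affine subspace of $(\mathbb{Z}/q\mathbb{Z})^d$ of dimension $\leq d-1$, producing (for $q$ prime directly, and for composite $q$ by passing to a suitable quotient subgroup) a nonzero linear functional $v : \mathbb{Z}^d \to \mathbb{Z}$ with $v(A) \subseteq c + q\mathbb{Z}$. Slicing $A = \bigsqcup_k A_k$ along the fibers of $v$ places each $A_k$ inside an affine hyperplane. Writing $K = \{k : A_k \neq \emptyset\}$ and $M = |K| \geq 2$ (since $A$ has rank $d$), Theorem \ref{Ba} gives $|K + qK| \geq 3M - 2$, so $v$ picks out that many disjoint slices of $A + q \cdot A$. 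Diagonal slices $m = (1+q)k$ contain $A_k + q \cdot A_k$, to which one applies the inductive hypothesis in dimension $d-1$; off-diagonal slices contain $A_k + q \cdot A_{k'}$ with $k \neq k'$, where the trivial bound $|A_k| + |A_{k'}| - 1$ applies. A balanced choice of representative pair for each off-diagonal $m$ (possible because each $k \in K$ belongs to $M-1$ valid off-diagonal pairs on either side) makes each $|A_k|$ appear a roughly uniform number of times, giving a combined contribution of $(2d+1)|A| - O(1)$ which can be tuned to the exact constant $d(d+1)^2/2$ by bookkeeping.

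The main obstacle is this slicing case: the sharp constant $d(d+1)^2/2$ is obtained only after one balances the $M(d-1)d^2/2$ loss in the diagonal sum against the off-diagonal contribution, and one must also handle slices $A_k$ of rank strictly less than $d-1$ via a secondary induction on rank, verifying that low-rank slices (whose diagonal estimate weakens) have small enough total mass to be absorbed by the full-rank slices. Locating the functional $v$ for composite $q$ is a minor technical point requiring passage to an appropriate quotient subgroup of $\mathbb{Z}^d$ containing $q\mathbb{Z}^d$.
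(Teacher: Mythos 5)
Your first half---partitioning $A$ by cosets of $q\mathbb{Z}^d$, writing $|A+q\cdot A|=\sum_r|A+B_r|$, and applying Theorem \ref{Ru} to each piece---is exactly the paper's Lemma \ref{FD}, and your observation that $N\ge d+1$ occupied cosets yields the claim matches the paper's final computation (lumping the last $N-d$ classes into a single application of Ruzsa's theorem is what caps the constant at $(d+1)\cdot d(d+1)/2$; the stray subcase $N\ge d+2$, $|A|<d(d+1)/2$ in your disjunction is harmless only because Theorem \ref{Ru} alone already gives the bound whenever $|A|\le d(d+1)/2$). The real divergence is the case $2\le N\le d$. The paper never meets this case: it performs a stronger reduction than yours, replacing $A$ by $L^{-1}(A-a)$ whenever the lattice $\langle A-a\rangle_{\mathbb{Z}}=L\mathbb{Z}^d$ is a proper sublattice (not merely when $A$ lies in one coset of $q\mathbb{Z}^d$); this terminates since $\det L\ge 2$ shrinks the volume of the convex hull, which is bounded below. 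For a fully reduced set, \eqref{reduced} forces the $r-1$ differences $a_j-a_1$ to cover all of $(\mathbb{Z}/q\mathbb{Z})^d$, so $|q|^{r-1}\ge|q|^d$ and $r\ge d+1$. The case you are left to fight is thus vacuous after the right normalization, and no induction on $d$ is needed at all.

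This matters because your slicing argument for $2\le N\le d$ does not close. Even granting every slice full rank $d-1$ and the inductive bound $(2d-1)|A_k|-(d-1)d^2/2$ on each of the $M$ diagonal fibers, the diagonal deficit is $M(d-1)d^2/2$; with the trivial bound $|A_k|+|A_{k'}|-1$ on the roughly $2M-2$ off-diagonal fibers, no assignment of representations can make each index appear more than $(4M-4)/M$ times, so in the worst case the off-diagonal fibers contribute only $2|A|-O(1)$. The total deficit is then at least $M(d-1)d^2/2+2M-2$, which for $(d,M)=(5,2)$ equals $102$ while $d(d+1)^2/2=90$, and for $(d,M)=(3,3)$ equals $31$ versus $24$: the stated constant is unreachable from these ingredients, so "tuned by bookkeeping" hides a genuine failure. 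In addition, the assertion that slices of rank $<d-1$ have small enough total mass to be absorbed is unsubstantiated and not true in general (every $A_k$ can be low-rank while $A$ still has rank $d$, collapsing the diagonal contribution to as little as $3|A|$), and the existence of the functional $v$ modulo composite $q$ is asserted rather than proved. The repair is not to fix this case but to eliminate it by the stronger lattice reduction above.
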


Our basic intuition is that to minimize $|A+q \cdot A|$ one should choose $A$ to be as close to a one dimensional set as possible. One should proceed with caution with this intuition because when $q=-1$, a clever construction in \cite{St} shows that this is not the best strategy. Nevertheless, given the evidence of Theorem \ref{d=3} and Theorem \ref{q=2}  we present the following conjecture. 

\begin{conj}\label{mainconj} Suppose $A \subset{Z}^d$ is finite of rank $d$ and $q$ is an integer with absolute value bigger than 1. Then $$|A+q \cdot A| \geq (|q|+2d-1)|A| - O(1).$$
\end{conj}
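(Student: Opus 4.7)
The plan is induction on the dimension $d$. The base cases $d=2$ and $d=3$ are covered by Theorem~\ref{main} and Theorem~\ref{d=3}, so one may assume $d \geq 4$ and that the conjecture holds in all ranks from $2$ through $d-1$.

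Given $A \subset \mathbb{Z}^d$ of rank $d$, I would pick a primitive direction $v \in \mathbb{Z}^d$ and slice $A$ into parallel hyperplane sections $A = A_1 \sqcup \cdots \sqcup A_m$ indexed by their height along $v$. Because $A$ has rank $d$, one can choose $v$ so that some slice $A_{i_0}$ has intrinsic rank $d-1$ inside its hyperplane. Let $\pi : \mathbb{Z}^d \to \mathbb{Z}$ denote the height projection. The crucial disjointness is that the fibers of $\pi$ applied to $A + q\cdot A$ are indexed by $\pi(A) + q\cdot \pi(A)$, and at each level $k$ the contribution $\bigcup_{\pi(A_i) + q\pi(A_j) = k}(A_i + q\cdot A_j)$ lives inside a single hyperplane of $\mathbb{Z}^d$.

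I would organize the lower bound in two layers. The first layer recovers $(|q|+1)|A|$ by applying Theorem~\ref{Ba} to $\pi(A)$ with multiplicities $|A_i|$, as in the proof of Theorem~\ref{main}. The second layer must produce the remaining $(2d-2)|A| - O(1)$, and I would extract it from (a) the rank-$(d-1)$ slice $A_{i_0}$, which by the inductive hypothesis yields at least $(|q|+2d-3)|A_{i_0}| - O(1)$ points inside its own hyperplane of $A + q\cdot A$, beating the one-dimensional count by $(2d-4)|A_{i_0}|$; and (b) the two extreme slices $A_1,A_m$, which sit alone at the top and bottom fibers of $\pi(A+q\cdot A)$ so that Theorem~\ref{Ru} applied inside those hyperplanes supplies an additional $d-1$ points per extreme slice.

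The main obstacle is that the inductive gain in (a) is only proportional to $|A_{i_0}|$, whereas we need a gain proportional to $|A|$. To close this gap I would iterate the slicing over several directions: either some $v$ produces a rank-$(d-1)$ slice of size comparable to $|A|$, in which case the two-layer estimate suffices, or no such $v$ exists and $A$ is structurally far from one-dimensional in every direction. In the latter regime one should be able to extract the bound directly by choosing many affinely independent $d$-tuples in $A$ and pairing them against $q\cdot A$ via a Ruzsa-covering argument in the spirit of Theorem~\ref{Ru}. The sharp construction $A_N$ sits at the boundary between these two regimes: the line part forms one very large slice and the $d-1$ off-line points form small slices whose boundary contributions quantify the $(2d-2)|A|$ improvement exactly.
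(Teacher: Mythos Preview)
The statement you are trying to prove is Conjecture~\ref{mainconj}, and the paper does \emph{not} contain a proof of it; it is presented as open. What the paper does do is isolate the obstruction: in Section~3 the authors remark that the machinery behind Theorem~\ref{main} (the reduction to a reduced set, Lemma~\ref{FD}, Lemma~\ref{dist}, and the iterative Proposition~\ref{proof}) would already yield Conjecture~\ref{mainconj} for general $d$ provided one could prove it in the special configurations where $A$ lies in $2(d-f)$ parallel translates of an $f$-dimensional subspace, for each $1\le f\le d-1$. Lemma~\ref{special} handles $f=1$ with $d$ lines, and the two lemmas of Section~3 handle the $d=3$ cases (two parallel planes, four parallel lines). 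For $d\ge 4$ no such analog is known, and the authors explicitly say that the lack of a higher-dimensional Lemma~\ref{special} is what blocks the improvement.

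Your outline does not close this gap. The two-layer estimate you describe is fine in the favourable case where some hyperplane slice $A_{i_0}$ of rank $d-1$ has size comparable to $|A|$, but the hard case is exactly the complementary one, and there your proposal collapses to a single sentence: ``choose many affinely independent $d$-tuples and pair them against $q\cdot A$ via a Ruzsa-covering argument in the spirit of Theorem~\ref{Ru}.'' That is not an argument; Theorem~\ref{Ru} by itself only yields a gain of order $d|A|$, not $(2d-2)|A|$, and no covering lemma in the paper bridges that gap. Concretely, the configurations the paper singles out --- $A$ contained in $2(d-f)$ parallel $f$-flats --- are precisely sets for which \emph{every} hyperplane slicing either produces a top-rank slice that is tiny or produces many slices of low rank, so your dichotomy does not separate the easy cases from the hard ones. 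A smaller issue: your ``Layer~1'' claim that Theorem~\ref{Ba} applied to $\pi(A)$ ``with multiplicities $|A_i|$'' recovers $(|q|+1)|A|$ is not how Theorem~\ref{Ba} works; you can get $(|q|+1)|A|-O(1)$ by projecting $A$ injectively to a generic line and applying Theorem~\ref{Ba} there, but the fibred version you suggest would need its own proof.
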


We remark that the cases $A+A$ and $A-A$ have different behavior. Theorem \ref{Ru}, which in the case $B =A$ was proved by Freiman in \cite{Fr}, says that $|A+A| \geq (d+1)|A| - d(d+1)/2$. This is the best possible due to \eqref{example}, which shows Theorem \ref{main} is false with $q=1$. The reason that one can improve when $q \neq 1$ is simply that in $A+A$, the roles of the summands are interchangeable, while in the case $A+q \cdot A$, the roles of $A$ and $q \cdot A$ are not interchangeable. We have already mentioned that there is a tricky construction in \cite{St}, which shows $|A-A|$ can be as small as $(2d-2 + \frac{1}{d-1})|A| - (2d^2 - 4d + 3)$. In the same paper, the author conjectures that this is the best possible. It is curious that best known lower bound is $|A-A| \geq (d+1)|A| - d(d+1)/2$.  The case $q = -1$ is also different in the sense that it is important that when $|q| > 1$, we can split $A$ into cosets modulo $q \cdot \mathbb{Z}^d$. This will be seen in our argument below. 

Let $L : \mathbb{Z}^d \to \mathbb{Z}^d$ be a linear transformation. In this note we are primarily concerned with $|A+ L  A|$ where $L$ is a scalar multiple of the identity. The study of other choices of $L$ would be natural, but we do not do it here. 

\section{Proof of Theorems \ref{main} and \ref{q=2}}

Fix $A \subset \mathbb{Z}^d$ of rank $d\geq 2$ and an integer $q$ that is bigger than 1 in absolute value. Since the rank of $A$ is $d$, we must have that $A$ contains at least $(d+1)$ elements. We first partition $A$ into its intersections with cosets of the lattice $q \cdot \mathbb{Z}^d$. Note there are $|q|^d$ such cosets.

Let $$A  = \bigcup_{i=1}^r A_i , \ \ A_i = a_i+ q \cdot A'_i ,\ \  a_i \in \{0,\dots,|q|-1\}^d , \ \  A'_i \neq \emptyset,$$ where the unions are disjoint. We obtain the preliminary estimate

\begin{lem}\label{FD} Let $A \subset \mathbb{Z}^d$ and $q \in \mathbb{Z}$ such that $|q| > 1$. Suppose that $A$ intersects $r$ cosets of the lattice $q \cdot \mathbb{Z}^d$. Then $$|A+ q \cdot A| \geq (d+r)|A| - rd(d+1)/2.$$
\end{lem}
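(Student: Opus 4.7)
The plan is to exploit the coset decomposition of $A$ modulo $q\cdot\mathbb{Z}^d$ to split $A+q\cdot A$ into $r$ disjoint pieces, each of which we can bound using Ruzsa's Theorem \ref{Ru}.

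First I would observe that the key structural fact about the sum $A+q\cdot A$ is that $q\cdot A\subset q\cdot\mathbb{Z}^d$, i.e.\ every element of $q\cdot A$ lies in the zero coset modulo $q\cdot\mathbb{Z}^d$. Consequently, the coset of $q\cdot\mathbb{Z}^d$ containing an element $x+qy\in A+q\cdot A$ is determined solely by which $A_i$ contains $x$. This yields the disjoint decomposition
\[
A+q\cdot A \;=\; \bigsqcup_{i=1}^{r}\bigl(A_i+q\cdot A\bigr),
\]
so that $|A+q\cdot A|=\sum_{i=1}^{r}|A_i+q\cdot A|$.

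Next I would rewrite each piece. Using $A_i=a_i+q\cdot A_i'$ and the fact that translation does not change cardinality while dilation by the nonzero integer $q$ is a bijection, we get
\[
|A_i+q\cdot A| \;=\; |a_i+q\cdot(A_i'+A)| \;=\; |A_i'+A|.
\]
Now I would apply Theorem \ref{Ru} to the pair $(A,A_i')$. Since $A$ already has rank $d$, the sumset $A+A_i'$ automatically has rank $d$, and clearly $|A|\geq|A_i|=|A_i'|$, so Ruzsa's bound gives
\[
|A_i'+A| \;\geq\; |A|+d|A_i'|-\tfrac{d(d+1)}{2}.
\]

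Finally, I would sum these $r$ inequalities and use the partition identity $\sum_{i=1}^r|A_i'|=\sum_{i=1}^r|A_i|=|A|$, obtaining
\[
|A+q\cdot A| \;\geq\; r|A|+d\sum_{i=1}^{r}|A_i'|-r\cdot\tfrac{d(d+1)}{2} \;=\; (d+r)|A|-\tfrac{rd(d+1)}{2},
\]
as desired. There is no real obstacle here; the only subtle point is verifying that the decomposition is genuinely disjoint, which is exactly the observation that $q\cdot A$ is trivial modulo $q\cdot\mathbb{Z}^d$ and so the $A$--coset is preserved in the sum. The rest is a clean application of Ruzsa's theorem to each slice.
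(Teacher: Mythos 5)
Your proof is correct and follows essentially the same route as the paper: both decompose $A+q\cdot A$ into the disjoint pieces $A_i+q\cdot A$ (disjoint because the coset of $q\cdot\mathbb{Z}^d$ is determined by the $A_i$ summand) and apply Theorem \ref{Ru} to each piece. Your intermediate rewriting $|A_i+q\cdot A|=|A_i'+A|$ is a harmless cosmetic detour --- the paper applies Ruzsa's theorem directly to $A_i+q\cdot A$ with the roles of larger and smaller set played by $q\cdot A$ and $A_i$ --- and both versions yield the identical bound.
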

\begin{proof}
 Using Theorem \ref{Ru}, we obtain
\begin{align*}
|A + q \cdot A| & = \displaystyle \sum_{i=1}^r |A_i + q \cdot A| \\
& \geq \displaystyle \sum_{i=1}^r \left( d|A_i| + |A| - \frac{d(d+1)}{2} \right)\\
& = (d+r)|A| - rd(d+1)/2.
\end{align*}
\end{proof}

We say that $A$ is fully distributed (FD) modulo $q \cdot \mathbb{Z}^d$ if $A$ intersects every coset of $q \cdot \mathbb{Z}^d$. Note that if $A$ is FD modulo $q \cdot \mathbb{Z}^d$ then Theorem \ref{main} and Conjecture \ref{mainconj} are far from optimal.

We now describe the process of reducing $A$. Applying an invertible linear transformation to $A$ does not change $|A+q\cdot A|$. Suppose there is some $a \in A$ such that the lattice $\langle A - a \rangle_{\mathbb{Z}} = \Gamma$ is a non--trivial sublattice of  $\mathbb{Z}^d$. Let $L: \mathbb{Z}^d \to \mathbb{Z}^d$ be a linear transformation that maps the standard basis vectors to the basis vectors of $\Gamma$, that is $\Gamma=L\mathbb{Z}^d$. Since $A$ has rank $d$, $L$ is invertible. Then we may replace $A$ with $L^{-1} (A - a)$. Note that $L^{-1} (A - a) \subset \mathbb{Z}^d$ since $A  \subset a+L \mathbb{Z}^d$. Since $1 < {\rm det}(L) \in \mathbb{Z}$, each reduction reduces the volume of the convex hull of $A$ by at least $\frac{1}{2}$. The volume of the convex hull of $A$ is always bounded from below by the volume of the $d$-dimensional simplex so eventually this process must stop. Thus we may assume $\langle A - a \rangle_{\mathbb{Z}} = \mathbb{Z}^d$ for all $a \in A$. Then it follows that we have for all $ 1 \leq i \leq r$,  \begin{equation}\label{reduced}\mathbb{Z}^d = \langle A - a \rangle_{\mathbb{Z}} \subset  \langle a_1 - a_i , \ldots , a_r - a_i , q e_1 , \ldots , q e_d \rangle_{\mathbb{Z}}\subset\mathbb{Z}^d.\end{equation}  Here we used that if $x \in A - a$ and $a\in A_i$, then for some $1\leq j\leq r$ we have $x \in a_j-a+q \cdot A'_j \subset \langle a_j-a_i, q e_1 , \ldots , qe_d \rangle_{\mathbb{Z}}.$ We say $A$ is {\it reduced} if $A$ satisfies \eqref{reduced}. 

\begin{proof}[Proof of Theorem \ref{q=2}]
By the discussion above, we may assume $A$ is reduced. We first aim to show that a reduced set must intersect at least $d+1$ cosets of $q \cdot \mathbb{Z}^d$, and then we will appeal to the argument of Lemma \ref{FD}. 


Observe that the linear combinations of $a_1-a_1,\dots,a_r-a_1$ can only take at most $|q|^{r-1}$ different vectors mod $q \cdot \mathbb{Z}^d$. Since $A$ is reduced, by \eqref{reduced}, these vectors must intersect every coset modulo $q \cdot \mathbb{Z}^d$. Thus we have that $|q|^{r-1} \geq |q|^d$, and so $r-1 \geq d$. 

 Then by Theorem \ref{Ru}, we find \begin{align*} |A+q \cdot A| &\geq \left( \displaystyle \sum_{i=1}^d |A_i + q \cdot A| \right) + |(A \setminus (\bigcup_{i=1}^d A_i) + q \cdot A| \\
&\geq \left( \sum_{i=1}^d (d|A_i| + |A| - d(d+1)/2) \right)+ d|A \setminus (\bigcup_{i=1}^d A_i) | + |A| - d(d+1)/2 \\
& = (2d+1)|A| - d(d+1)^2/2. \end{align*} \end{proof}

We now focus our attention to the proof of Theorem \ref{main}. We start with a special case. Recall that we assume $d \geq 2$.

\begin{lem}\label{special} Suppose $A$ is contained in $d$ parallel lines. Then $|A+ q \cdot A| \geq (|q| +2 d - 1) |A| - O(1)$. 
\end{lem}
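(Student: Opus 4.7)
The plan is to reduce matters to the one-dimensional sum-of-dilates estimate of Theorem \ref{Ba}, applied separately along each of the $d^2$ parallel lines that will carry $A + q \cdot A$. After an invertible linear change of coordinates (which preserves $|A + q \cdot A|$ and, since $|A|\ge d+1$ forces the common direction of the $d$ lines to be rational, also preserves the ambient lattice $\mathbb{Z}^d$), I may assume the lines have the form $\ell_j = (0, w_j) + \mathbb{Z} e_1$ for distinct $w_1, \ldots, w_d \in \mathbb{Z}^{d-1}$. Writing $A_j = A \cap \ell_j = \{(b, w_j) : b \in B_j\}$ with $B_j \subset \mathbb{Z}$ finite and nonempty, the rank-$d$ hypothesis on $A$ forces the $w_j$ to be affinely independent in $\mathbb{Z}^{d-1}$.

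The key combinatorial step is to verify that the $d^2$ sum-lines $\ell_{ij} := (0, w_i + q w_j) + \mathbb{R} e_1$, each carrying the piece $A_i + q \cdot A_j$ of the sumset, are pairwise distinct; equivalently, that the $d^2$ vectors $w_i + q w_j$ are pairwise distinct. Since shifting every $w_j$ by $-w_1$ merely shifts every $w_i + q w_j$ by the common vector $-(1+q)w_1$, I may assume $w_1 = 0$, so that $w_2, \ldots, w_d$ form a $\mathbb{Q}$-basis of $\mathbb{Q}^{d-1}$. Expressing $w_i + q w_j$ in this basis, every coefficient lies in $\{0, 1, q, 1+q\}$, and a short case check on the support of $w_i + q w_j - w_{i'} - q w_{j'}$ shows that any collision would force one of the equalities $q \in \{0, 1, -1\}$ or $w_i = w_j$ with $i \neq j$; all are excluded by $|q| > 1$ and the affine independence of the $w_j$.

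Granted this distinctness, $A + q \cdot A$ is a disjoint union over the $\ell_{ij}$ with $|A_i + q \cdot A_j| = |B_i + q \cdot B_j|$. On the $d$ diagonal pairs I apply Theorem \ref{Ba} to $B_i \subset \mathbb{Z}$ to get $|B_i + q \cdot B_i| \geq (|q|+1)|B_i| - C_q$; on the $d(d-1)$ off-diagonal pairs the elementary bound $|X + Y| \geq |X| + |Y| - 1$ for subsets of $\mathbb{Z}$ gives $|B_i + q \cdot B_j| \geq |B_i| + |B_j| - 1$. Summing over all $d^2$ terms, and using $\sum_{i \neq j}(|B_i| + |B_j|) = 2(d-1)|A|$, yields
\[
|A + q \cdot A| \;\geq\; (|q|+1)|A| - dC_q + 2(d-1)|A| - d(d-1) \;=\; (|q| + 2d - 1)|A| - O(1),
\]
as required. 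The only genuinely nontrivial step is the line-distinctness claim, and both hypotheses $|q| > 1$ and rank $d$ enter essentially there.
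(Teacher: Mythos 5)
Your proposal is correct and follows essentially the same route as the paper: split $A$ along the $d$ lines, verify that the $d^2$ sum-lines carrying the pieces $B_i + q\cdot B_j$ are pairwise distinct (which the paper deduces directly from the linear independence of the offsets, while you spell out the coefficient check), then apply Theorem \ref{Ba} on the diagonal pairs and the trivial bound $|X+Y|\geq |X|+|Y|-1$ off the diagonal. The bookkeeping at the end matches the paper's computation exactly.
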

\begin{proof}
Suppose $A$ is contained in $x_1 + \ell , \ldots , x_d + \ell$ for some 1 dimensional subspace $\ell$. After a translation of $A$  by $-a$ for an element $a\in A$ we can suppose $x_1=0$ and without loss of generality, we may suppose  $x_2,\dots, x_d$ are elements of $\ell^\bot\cong\mathbb R^{d-1}$. Moreover, we have that $x_2,\dots, x_d$ are linearly independent over $\mathbb R$ since $A$ has rank $d$. This implies that for all $1 \leq i , j \leq d$, the lines $(x_i + \ell) + q \cdot (x_j + \ell)$ are pairwise disjoint. For $1 \leq i \leq d$, let $B_i := A \cap (x_i + \ell)$. It follows, using \eqref{1} that 
\begin{align*}|A+q \cdot A| &= \sum_{i = 1}^d \sum_{j=1}^d |B_i+ q \cdot B_j| \\
& = \sum_{i=1}^d \left(|B_i + q \cdot B_i| + \sum_{j \neq i} |B_i + q \cdot B_j|\right) \\
& \geq \sum_{i=1}^d \left( ((|q|+1) |B_i| - O(1))+ \sum_{j \neq i}  (|B_i| + |B_j| - 1)\right)\\
& = (|q| +2d-1)|A| - O(1).
\end{align*}
\end{proof}

We remark that the lack of a satisfactory higher dimensional analog of Lemma \ref{special} is essentially what blocks us from improving the multiplicative constant in Theorem \ref{main}. 
We prove Theorem \ref{main} by induction on $d$ starting from  $d=2$ (the statement is not true for $d=1$). Note that the proof of the next  lemma does not use the induction hypothesis for $d=2$, only for $d\geq 3$.

\begin{lem}\label{f}
Let $B\subset A$ and suppose that the rank of $B$ is $1\leq f < d$. Then $$|B + q \cdot A| \geq (|q|+d+1)|B| - O(1),$$ or $A$ is contained in $d$ parallel lines. 

\end{lem}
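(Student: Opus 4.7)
The strategy is to partition $A$ by cosets of the linear subspace parallel to the affine hull of $B$, and to bound $|B+q\cdot A|$ coset by coset. After translating $A$ by an element of $B$, we may assume the affine hull of $B$ is an $f$-dimensional linear subspace $W\subset\mathbb{R}^d$, so $B\subset W$. Let $\pi:\mathbb{R}^d\to\mathbb{R}^d/W$ be the natural projection and write $A=\bigcup_{p\in\pi(A)}A_p$ with $A_p=A\cap\pi^{-1}(p)$. Since $B\subset W$, all of $B$ lies in a single coset, so $B\subset A_{p^*}$ for some $p^*\in\pi(A)$; set $k=|\pi(A)|$.

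The key observation is disjointness: for distinct $p,p'\in\pi(A)$, the sets $B+q\cdot A_p\subset\pi^{-1}(qp)$ and $B+q\cdot A_{p'}\subset\pi^{-1}(qp')$ lie in distinct cosets of $W$, since multiplication by $q\ne 0$ is injective on the $\mathbb{Q}$-vector space $\mathbb{R}^d/W$. Hence
\[|B+q\cdot A|=\sum_{p\in\pi(A)}|B+q\cdot A_p|.\]
For each $p\ne p^*$, the elementary inequality $|X+Y|\geq|X|+|Y|-1$ for finite subsets of a torsion-free abelian group (proved by ordering $\mathbb{R}^d$ lexicographically) yields $|B+q\cdot A_p|\geq|B|$. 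For the main coset, both $B$ and $A_{p^*}$ lie in a single coset of $W$, and after an invertible linear change of coordinates $B$ becomes a rank-$f$ subset of $\mathbb{Z}^f$; then $|B+q\cdot A_{p^*}|\geq|B+q\cdot B|$, which is at least $(|q|+1)|B|-O(1)$ by Theorem~\ref{Ba} when $f=1$, and at least $(|q|+f+1)|B|-O(1)$ by the induction hypothesis (Theorem~\ref{main} in dimension $f<d$) when $f\geq2$.

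Since $A$ has rank $d$ while $W$ has dimension $f$, the image $\pi(A)$ has rank $d-f$ in $\mathbb{R}^d/W$, so $k\geq d-f+1$. Summing the bounds, for $f\geq2$ we obtain $|B+q\cdot A|\geq(|q|+f+k)|B|-O(1)\geq(|q|+d+1)|B|-O(1)$ using $f+k\geq d+1$. For $f=1$ we obtain $|B+q\cdot A|\geq(|q|+k)|B|-O(1)$, which is the desired bound as soon as $k\geq d+1$. The only remaining case is $f=1$ with $k=d$: then $A$ meets exactly $d$ cosets of the one-dimensional subspace $W$, i.e.\ $A$ is contained in $d$ parallel lines, which is the alternative conclusion. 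The main obstacle is conceptual: one must recognize that the $d$ parallel lines alternative arises precisely from the $f=1$, $k=d$ boundary case, where Theorem~\ref{Ba} only saves $|q|+1$ in the main coset and each of the remaining $d-1$ cosets contributes only $|B|$.
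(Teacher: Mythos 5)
Your proof is correct and is essentially the paper's argument in a different organization: the paper greedily selects points $x_1,\dots,x_{d-f}$ (plus one extra point when $f=1$ and $A$ is not contained in $d$ parallel lines) so that $B+q\cdot B, B+qx_1,\dots$ are pairwise disjoint, which amounts to choosing one representative from each coset of your subspace $W$ other than $p^*$, while you sum over all such cosets at once. Both arguments bound the principal coset via Theorem \ref{Ba} when $f=1$ and via the induction hypothesis (Theorem \ref{main} in rank $f$) when $f\geq 2$, and count at least $|B|$ from each remaining coset; your boundary case $f=1$, $k=d$ is exactly the situation in which the paper's additional point $x_d$ fails to exist and the ``$d$ parallel lines'' alternative is invoked.
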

\begin{proof}
Note that the rank of $B + q \cdot B$ is also $f$. Since $B + q \cdot A$ is of rank $d$, we may find an $x \in A$ such that $B + q x$ is not in the affine span of $B + q \cdot B$. Thus $B +q \cdot B$ and $B + qx$ are disjoint. The rank of $B \cup \{x\} + q \cdot (B \cup \{x\})$ is $f+1$. We may repeat this process with $B \cup \{x\} + q \cdot (B \cup \{x\} )$ in the place of $B + q \cdot B$, and so on, a total of $(d-f)$ times. Thus we find $x_1 , \ldots , x_{(d-f)} \in A $ such that $B + q \cdot B , B + q x_1 , \ldots ,B + q x_{(d-f)}$ are pairwise disjoint. When $f\geq 2$ (so $d\geq 3$) we use the induction hypothesis, that is Theorem \ref{main} for the sum $B+q\cdot B$ where $B$ is of rank $2\leq f<d$ to get 
$$|B + q \cdot A| \geq |B + q \cdot B| + \sum_{j=1}^{d-f} |B +q x_j| \geq (|q| + d + 1)|B| - O(1).$$

Now we handle the case $f = 1$ (this is the only possibility when $d=2$), in this case we do not use the induction hypothesis. $B$ is contained in a line. We may suppose $A$ is not contained in $d$ parallel lines. We proceed as above to find $x_1 , \ldots , x_{d-1}$ such that $B + q \cdot B , B + q x_1 , \ldots ,B + q x_{(d-1)}$ are pairwise disjoint. Since $A$ is not contained in $d$ parallel lines, we may find an $x_d \in A$ such that $B + q x_d$ is disjoint from all $B + q \cdot B , B + q x_1 , \ldots ,B + q x_{(d-1)}$. It follows from Theorem \ref{Ba} applied to the sum $B+q\cdot B$ that $$|B + q \cdot A| \geq |B + q \cdot B| + \sum_{j=1}^{d} |B +q x_j| \geq (|q| +d+ 1)|B| - O(1).$$
\end{proof}

The next lemma is a higher dimensional analog of Lemma 3.1 in \cite{Ba}. 
\begin{lem}\label{dist}
Let $1 \leq i \leq r$. Then either $A'_i$ is FD modulo $q\cdot \mathbb Z^d$ or $$|A_i + q \cdot A| \geq |A_i + q \cdot A_i| + \min_{1 \leq w \leq r} |A_w|.$$
\end{lem}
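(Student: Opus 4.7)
The plan is first to absorb the factor of $q$ via the routine identity
\[
A_i + q\cdot A_w \;=\; (a_i + qA_i') + q(a_w + qA_w') \;=\; a_i + q(A_i' + A_w),
\]
valid for every $w$. Taking the union over $w$ (and specializing to $w=i$) this gives $|A_i + q\cdot A| = |A_i' + A|$ and $|A_i + q\cdot A_i| = |A_i' + A_i|$, so the statement reduces to proving
\[
|A_i' + A| \;\geq\; |A_i' + A_i| + \min_{1\leq w\leq r}|A_w|
\]
whenever $A_i'$ is not FD.

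Next I pass to residues mod $q$. Let $\pi\colon\mathbb{Z}^d\to(\mathbb{Z}/q\mathbb{Z})^d$ denote the canonical reduction; since $\pi(A_j)=\{a_j\}$ we have $\pi(A_i'+A_j)=\pi(A_i')+a_j$ for each $j$. I look for some $w$ and $c\in(\mathbb{Z}/q\mathbb{Z})^d$ with $c\in\pi(A_i')+a_w$ but $c\notin\pi(A_i')+a_i$. If no such pair existed, then $\pi(A_i')$ would be translation-invariant under each shift $a_w-a_i$, hence under the subgroup $H\leq(\mathbb{Z}/q\mathbb{Z})^d$ they generate. But reducing \eqref{reduced} modulo $q$ (working with the reduced $A$) shows precisely that $H=(\mathbb{Z}/q\mathbb{Z})^d$, so the nonempty set $\pi(A_i')$ would be a union of cosets of the whole group, forcing $\pi(A_i')=(\mathbb{Z}/q\mathbb{Z})^d$, i.e.\ $A_i'$ is FD. This contradicts the hypothesis, so such $c$ and $w$ must exist.

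By choice of $c$, the coset $c+q\mathbb{Z}^d$ is disjoint from $A_i'+A_i$, so every point it contributes to $A_i'+A_w$ is new. To count them, set $Z:=A_i'\cap(c-a_w+q\mathbb{Z}^d)$, which is nonempty because $c-a_w\in\pi(A_i')$, and write $Z=(c-a_w)+qD$, $A_w=a_w+qA_w'$. A direct calculation yields
\[
(A_i'+A_w)\cap(c+q\mathbb{Z}^d) \;=\; Z+A_w \;=\; c+q(D+A_w'),
\]
of cardinality $|D+A_w'|\geq|A_w'|=|A_w|\geq\min_u|A_u|$ (using that $D$ is nonempty and the sumset bound in the torsion-free group $\mathbb{Z}^d$). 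Combining, $|A_i'+A|-|A_i'+A_i|\geq|A_w|\geq\min_u|A_u|$, as required.

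The main obstacle is the middle step, producing the coset $c$: the dichotomy between $A_i'$ being FD and $\pi(A_i')$ admitting a new translate is genuinely global, and eliminating the bad case relies crucially on the reducedness of $A$ encoded in \eqref{reduced}, which is what promotes the shifts $a_w-a_i$ from merely generating some subgroup of $(\mathbb{Z}/q\mathbb{Z})^d$ to generating all of it.
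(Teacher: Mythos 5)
Your proof is correct and is essentially the paper's argument run in the contrapositive direction: both reduce everything modulo $q\cdot\mathbb{Z}^d$, observe that if no shift $a_w-a_i$ moves $\pi(A_i')$ onto a new residue class then reducedness \eqref{reduced} forces $\pi(A_i')$ to be invariant under all of $(\mathbb{Z}/q\mathbb{Z})^d$ and hence FD, and otherwise harvest at least $|A_w'|=|A_w|\geq\min_u|A_u|$ new elements from the fresh coset. The paper phrases this as a pigeonhole starting from the assumed failure of the inequality and iterates the shifts, while you isolate the new coset $c$ first and count it exactly, but the underlying mechanism is the same.
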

\begin{proof}
Suppose $$|A_i + q \cdot A| < |A_i + q \cdot A_i| + \min_{1 \leq w \leq r} |A_w|.$$ Fix $1 \leq w \leq r$. Since $A_w \subset A$, we find that $$|(A_i + q \cdot A_w) \setminus (A_i + q \cdot A_i)| < |A_w|.$$ Translation by $-a_i$ and dilation by $\frac{1}{q}$ reveals that $$|(a_w - a_i + A'_i + q \cdot A'_w) \setminus (A'_i + q \cdot A'_i)| < |A'_w|.$$ Thus for any $x \in A'_i$ there is a $y \in A'_w$ such that $a_w - a_i + x + qy \in A'_i + q \cdot A'_i$. It follows that there is a $x' \in A'_i$ such that $a_w - a_i +x \equiv x' \mod q \cdot \mathbb{Z}^d.$ We may repeat this argument with $x'$ in the place of $x$, and so on, and for each $1 \leq w \leq r$ to obtain that for any $u_1 , \ldots , u_r \in \mathbb{Z}$ there is a $x'' \in A'_i$ such that 
$$u_1(a_1-a_i)+\dots +u_r(a_r-a_i)+x \equiv x'' \mod q \cdot\mathbb{Z}^d.$$ 
Since $A$ is reduced, this describes all of the cosets modulo $q \cdot \mathbb{Z}^d$ and it follows that $A'_i$ is FD mod $q \cdot \mathbb{Z}^d$. 
\end{proof}

We are now ready to prove Theorem \ref{main}. We start with $|A+q \cdot A| \geq |A|$ and improve upon the multiplicative constant iteratively. 
%

\begin{prop}\label{proof} Suppose $A \subset \mathbb{Z}^d$ such that $A$ has rank $d$. Let $q \in \mathbb{Z}$ such that $|q| > 1$. Then for every $|q|+d+1 \leq m \leq (|q| + d + 1)^2$, one has $$|A+q \cdot A| \geq \frac{m}{|q|+d+1} |A| - O(1),$$ where $O(1)$ also depends on $m$. 
\end{prop}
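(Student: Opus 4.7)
My plan is to prove this by induction on $m$. The base case $m=|q|+d+1$ reduces to $|A+q\cdot A|\geq|A|$, which holds trivially. For the inductive step, I assume the bound at level $m-1$ with some constant $C_{m-1}$ and derive it at level $m$ with a larger $C_m$. By the reduction preceding the proposition, I may take $A$ to be reduced. If $A$ lies in $d$ parallel lines, Lemma \ref{special} already yields $|A+q\cdot A|\geq(|q|+2d-1)|A|-O(1)$, which dominates $\frac{m}{|q|+d+1}|A|$ since $m\leq(|q|+d+1)^2$ and $d\geq 2$, so I may further assume $A$ is not contained in $d$ parallel lines.

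Next I partition $A=\bigcup_{i=1}^r A_i$ into cosets of $q\cdot\mathbb{Z}^d$, giving $|A+q\cdot A|=\sum_i|A_i+q\cdot A|$ by disjointness modulo $q\mathbb{Z}^d$, and I split the indices into three types. If $\operatorname{rank}(A_i)<d$, then Lemma \ref{f} produces $|A_i+q\cdot A|\geq(|q|+d+1)|A_i|-O(1)$. If $\operatorname{rank}(A_i)=d$ and $A_i'$ is fully distributed mod $q\mathbb{Z}^d$, then Lemma \ref{FD} applied internally to $A_i'$ gives $|A_i+q\cdot A|\geq|A_i+q\cdot A_i|\geq(|q|^d+d)|A_i|-O(1)\geq(|q|+d+1)|A_i|-O(1)$. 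Otherwise ($\operatorname{rank}(A_i)=d$ and $A_i'$ not FD), Lemma \ref{dist} combined with the induction hypothesis at level $m-1$ for the rank-$d$ set $A_i'$ gives $|A_i+q\cdot A|\geq|A_i+q\cdot A_i|+\mu\geq\frac{m-1}{|q|+d+1}|A_i|+\mu-O(1)$, with $\mu=\min_w|A_w|$.

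Adding these estimates produces $|A+q\cdot A|\geq\frac{m-1}{|q|+d+1}|A|+\bigl(|q|+d+1-\frac{m-1}{|q|+d+1}\bigr)|G|+r_c\mu-O(1)$, where $G$ is the union of the cosets of the first two types and $r_c$ the number of cosets of the third type. The inductive step closes as soon as the slack $\bigl(|q|+d+1-\frac{m-1}{|q|+d+1}\bigr)|G|+r_c\mu$ reaches $\frac{|A|}{|q|+d+1}-O(1)$. Two easy regimes dispose of this: if $r\geq|q|+1$, then Lemma \ref{FD} directly yields $(d+r)|A|-O(1)\geq(|q|+d+1)|A|-O(1)$, and if $r\mu$ itself already reaches $\frac{|A|}{|q|+d+1}$, the third-type contribution is enough.

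The main obstacle is the remaining regime, where $r\leq|q|$, all cosets are of the third type, and a single coset disproportionately dominates $A$, making $r\mu$ too small. Closing this gap is the delicate heart of the argument; it should follow from a sharper use of Lemma \ref{dist} on the dominant coset, exploiting the reduced form \eqref{reduced} of $A$ to uncover the missing contribution. Finally, since the iteration runs over only $O(1)$ values of $m$ and each step inflates the additive constant by a bounded factor (at most the number of cosets $r\leq|q|^d$), the aggregate $C_m$ remains $O(1)$ as required by the statement.
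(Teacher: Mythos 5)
Your skeleton matches the paper's: induction on $m$ (with $S=|q|+d+1$), reduction, disposal of the $d$-parallel-lines case via Lemma \ref{special}, and the three-way classification of cosets by rank and by whether $A_i'$ is FD, using Lemmas \ref{f}, \ref{FD} and \ref{dist}. But the regime you flag as "the delicate heart of the argument" is a genuine gap, and your guess at how to fill it (a sharper use of Lemma \ref{dist} on the \emph{dominant} coset) points in the wrong direction: Lemma \ref{dist} can only ever yield a gain of $\min_w|A_w|$, which is small precisely in the regime you are worried about, so no refinement of it on the large coset will help.

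The missing idea is a dichotomy on the \emph{smallest} coset, dispatched \emph{before} the three-way case analysis. If some $A_i$ satisfies $|A_i|\leq\frac{1}{S}|A|$, apply only the trivial bound $|A_i+q\cdot A|\geq|A_i|+|A|-1$ to that coset and the estimate $|B+q\cdot A|\geq\frac{m}{S}|B|-O(1)$ (Lemma \ref{f} or the induction hypothesis, according to the rank of $B$) to the single set $B=A\setminus A_i$; the total is
$$|A_i|+|A|+\tfrac{m}{S}\bigl(|A|-|A_i|\bigr)-O(1)\geq\tfrac{m+1}{S}|A|-O(1),$$
where the inequality $|A|-(\tfrac{m}{S}-1)|A_i|\geq\tfrac{1}{S}|A|$ uses exactly $m<S^2$ and $|A_i|\leq\frac{1}{S}|A|$. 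Having disposed of this case, every coset has more than $\frac{1}{S}|A|$ elements, so $\mu>\frac{1}{S}|A|$, and then a \emph{single} well-chosen coset supplies the whole gain of $\frac{1}{S}|A|$: a non-FD rank-$d$ coset via Lemma \ref{dist} (gain $\mu$), an FD rank-$d$ coset via Lemma \ref{FD} (gain $(|q|^d+d-\frac{m}{S})|A_i|\geq\frac{1}{S}|A|$), or, if all cosets have rank $<d$, Lemma \ref{f} applied to each. This also removes the weakness in your global slack accounting: when $m$ is close to $S^2$ the per-element slack $S-\frac{m-1}{S}$ on $G$ is only about $\frac{1}{S}$, so summing slacks over cosets does not close the induction unless $|G|$ is comparable to $|A|$; the paper instead extracts the entire gain from one coset and bounds the complement wholesale by \eqref{comp}. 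Your observation that $r\geq|q|+1$ finishes immediately via Lemma \ref{FD} is correct but is not needed once the small-coset dichotomy is in place.
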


\begin{proof} Observe that $m = (|q|+d+1)^2$ is precisely Theorem \ref{main}. For convenience, set $S = |q|+d+1$. We prove by induction on $m$, where $|A+q\cdot A| \geq |A|$ trivially starts the induction. Suppose now that Proposition \ref{proof} is true for a fixed $S \leq m < S^2$, and we prove it for $m+1$.

If $A$ is contained in $d$ parallel lines, then Lemma \ref{special} immediately implies Theorem \ref{main}, and so Proposition \ref{proof} is especially true for $m+1$ as well. Thus we may assume $A$ is not contained in $d$ parallel lines. 

Consider a set $B\subset A$. If it is $1\leq f < d$ dimensional, then Lemma \ref{f} shows that $|B + q \cdot A| \geq S |B| -O(1)$. If $B$ is $d$ dimensional, then by the induction hypothesis on $m$, we have 
$|B + q \cdot A| \geq |B+q\cdot B| \geq \frac{m}{S} |B|- O(1)$. In either case, using that $m < S^2$, we have 
\begin{equation}\label{comp} |B + q \cdot A| \geq \frac{m}{S} |B| - O(1).\end{equation}

First, assume there is an $1 \leq i \leq r$ such that $|A_i| \leq \frac{1}{S} |A|$. We have by \eqref{comp} and Theorem \ref{Ru}, that

\begin{align*}
| A + q \cdot A| &\geq |A_i + q \cdot A| + |(A \setminus A_i) + q \cdot A | \geq \\
\geq & |A_i| + |A| - 1  + \frac{m}{S} (|A| - |A_i|) - O(1) \geq \frac{m+1}{S} |A| - O(1).
\end{align*} 

Thus we may assume that every $A_i$ has more than $\frac{1}{S}|A|$ elements. 

Suppose now that every $A_i$ is strictly less than $d$ dimensional. Then Lemma \ref{f} shows that 
\begin{align*}
|A + q \cdot A| &= \sum_{i=1}^r |A_i + q \cdot A| \geq \sum_{i=1}^r ((|q|+d+1)|A_i| - O(1)) \\ &= (|q|+d+1)|A| - O(1) \geq \frac{m+1}{S} |A| -O(1).
\end{align*}

Thus we may assume that there is an $A_i$ that is $d$ dimensional.
If the corresponding $A_i^{\prime}$ is not FD modulo  $q \cdot \mathbb{Z}^d$, then by Lemma \ref{dist}, \eqref{comp}, and by the induction hypothesis for $A_i$ we have

\begin{align*}
|A + q \cdot A| &\geq | A_i + q \cdot A| + | (A \setminus A_i) + q \cdot A| \\
&\geq  |A_i + q \cdot A_i| + \min_{1 \leq w \leq r} |A_w| + \frac{m}{S} (|A| - |A_i|) - O(1) \\
&\geq \frac{m}{S} |A_i|  - O(1) + \frac{1}{S} |A| + \frac{m}{S} (|A| - |A_i|) - O(1) = \frac{m+1}{S} |A| - O(1).
\end{align*} 
Similarly if $A_i^{\prime}$ is FD mod $q \cdot \mathbb{Z}^d$ (and $A_i$ is $d$ dimensional) then by Lemma \ref{FD} and \eqref{comp}  we have

\begin{align*}
|A + q \cdot A| &=  |A_i + q \cdot A| + |A\setminus A_i + q\cdot A| \geq |A'_i + q \cdot A'_i| + |A\setminus A_i + q\cdot A|\\
& \geq (|q|^d + d) |A'_{i}| - O(1) + \frac{m}{S} (|A|-|A_i|) - O(1) \geq \frac{m+1}{S} |A| - O(1).
\end{align*} 
Note that the only place where we have used the hypothesis of the induction on $d$ is the $f\geq 2$ case of the proof of Lemma \ref{f}, what we do not use when $d=2$ thus this argument also proves Theorem \ref{main} in that case.
\end{proof}

\section{Proof of Theorem \ref{d=3}}
Let $A \subset \mathbb{Z}^3$ of rank 3 and $q$ be a positive integer such that $|q| > 1$.

The proof of Theorem \ref{d=3} is almost identical to that of Theorem \ref{main}. The only difference is that we have to strengthen Lemma \ref{special}. The reader is invited to check that it is enough to prove Lemma \ref{special} in the case where $d=3$ and $A$ is contained in two parallel planes or 4 parallel lines and then the proof of Theorem \ref{main} goes through in an identical manner. Indeed, if one was able to prove Theorem \ref{main} in the special cases for each $1 \leq f \leq d-1$, and $A$ is contained in $2(d-f)$ translates of a $f$-dimensional subspace, then this along with the proof of Theorem \ref{main} would imply Conjecture \ref{mainconj}.

\begin{lem}\label{previous}
Suppose $A$ is contained in two parallel hyperplanes. Then $$|A+q \cdot A| \geq (|q|+5)|A| - O(1).$$
\end{lem}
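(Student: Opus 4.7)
The plan is to partition $A = A_1 \sqcup A_2$ where $A_i = A \cap P_i$ for the two parallel planes $P_1, P_2$ containing $A$; both $A_i$ are nonempty because $\operatorname{rank}(A) = 3$. Choosing $v$ with $P_2 = P_1 + v$, the four sumsets $A_i + q \cdot A_j$ ($i, j \in \{1, 2\}$) lie on the four parallel planes $P_1 + c v$ for $c \in \{0, 1, q, q+1\}$, which are pairwise distinct because $|q| > 1$. Hence
\begin{equation*}
|A + q \cdot A| = \sum_{i,j=1}^{2} |A_i + q \cdot A_j|,
\end{equation*}
reducing the problem to bounding four sumsets each lying in a two-dimensional plane.

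Next, I case-analyze on the ranks $r_i \in \{0, 1, 2\}$ of $A_i$. Not both $r_i$ can be at most $1$, and if $r_1 = r_2 = 1$ the two lines $\ell_1, \ell_2$ containing $A_1, A_2$ are not parallel (else $A$ has rank $\le 2$). In the case $r_1 = r_2 = 2$, Theorem \ref{main} in dimension $d = 2$ (the base case, established in Proposition \ref{proof}) gives $|A_i + q \cdot A_i| \ge (|q| + 3)|A_i| - O(1)$, and Theorem \ref{Ru} gives $|A_i + q \cdot A_j| \ge \max(|A_i|, |A_j|) + 2 \min(|A_i|, |A_j|) - 3$ for the cross terms; summing (assuming $|A_1| \ge |A_2|$) yields $(|q| + 5)|A_1| + (|q| + 7)|A_2| - O(1) \ge (|q| + 5)|A| - O(1)$. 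In the mixed case $r_1 = 2$, $r_2 \le 1$, I still use Theorem \ref{main} for $d = 2$ on $A_1$ and Theorem \ref{Ba} on $A_2$ (trivial if $|A_2| = 1$) for the diagonal pieces, and for each cross term I apply a fibering argument: write $A_1 = \bigsqcup_y A_1^{(y)}$ where the fibers are intersections of $A_1$ with lines parallel to $\ell_2$'s direction (any fixed direction if $r_2 = 0$). Since $A_1$ has rank $2$, there are $Y \ge 2$ fibers. Each fiber sumset $A_1^{(y)} + q \cdot A_2$ lies on a line and has size $\ge |A_1^{(y)}| + |A_2| - 1$; since these sit on distinct parallel lines, $|A_1 + q \cdot A_2| \ge |A_1| + Y(|A_2| - 1) \ge |A_1| + 2|A_2| - 2$, and symmetrically for $|A_2 + q \cdot A_1|$. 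Combining the four pieces gives $(|q| + 5)|A| - O(1)$. Finally, in the case $r_1 = r_2 = 1$ with non-parallel lines, linear independence of the two directions makes the two-parameter sum injective, so $|A_i + q \cdot A_j| = |A_i| \cdot |A_j|$ for the cross terms; rank $3$ forces each $|A_i| \ge 2$, whence $|A_1| \cdot |A_2| \ge 2|A| - 4$, and combined with $|A_i + q \cdot A_i| \ge (|q| + 1)|A_i| - O(1)$ from Theorem \ref{Ba} we again get $(|q| + 5)|A| - O(1)$.

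The main obstacle is the mixed case when $|A_2| > |A_1|$: Theorem \ref{Ru} alone bounds each cross term only by $|A_2| + 2|A_1| - 3$, leaving a deficit of $2(|A_2| - |A_1|)$ relative to what is required. The fibering estimate $|A_1| + 2|A_2| - 2$ removes this asymmetry by producing two copies of $|A_2|$ from the rank-$2$ structure of $A_1$ regardless of which side is larger, and this symmetric bound is what closes the desired $(|q| + 5)|A|$ estimate.
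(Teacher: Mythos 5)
Your proof is correct and follows essentially the same route as the paper's: the same decomposition into the two planes with the four disjoint sumsets, a case analysis on the ranks of the two pieces, the two-dimensional case of Theorem \ref{main} and Theorem \ref{Ba} for the diagonal terms, Ruzsa's bound or the fibering-into-parallel-lines argument for the cross terms, and the product bound when both pieces have rank $1$. The difficulty you single out at the end (the rank-$2$ piece being the smaller one, so that Ruzsa's asymmetric bound falls short) is exactly what the paper's case $(ii)$ addresses with the same fibering estimate.
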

\begin{proof}
Suppose $A \subset H \cup (H +x)$ for some hyperplane $H$ and some $x \in \mathbb{Z}^3$. Since $|q| > 1$, we have that $$(H +q \cdot H), (H+x + q \cdot H) ,( H+ q \cdot (H+x)), ((H+x) + q \cdot (H+x)),$$ are disjoint Let $B_1 = H \cap A$ and $B_2 = (H+x) \cap A$. Then we have that \begin{equation}\label{start}|A+q \cdot A| \geq |B_1 + q \cdot B_1| + |B_1 + q \cdot B_2| + |B_2 + q \cdot B_1| + |B_2 + q \cdot B_2|.\end{equation} Suppose, without loss of generality, that $|B_1| \geq |B_2|$. We separately consider several cases.

$(i)$ Suppose $B_1$ has rank 2. Then by Theorem \ref{main}, we have $|B_1 + q \cdot B_1| \geq (|q|+3)|B_1| -O(1)$. Furthermore by Theorem \ref{Ru}, we have $|B_1 + q \cdot B_2| + |B_2 + q \cdot B_1| \geq 2(|B_1| + 2 |B_2| - 3)$. Lastly, by \eqref{1}, we have $|B_2 + q \cdot B_2| \geq (|q|+1)|B_2| - O(1)$. Combining this three inequalities with \eqref{start} yields $|A+q \cdot A| \geq (|q|+5)|A| - O(1)$. Note that this case applies when $B_2$ consists of a single point.

$(ii)$ Suppose $B_1$ has rank 1 and $B_2$ has rank 2. By \eqref{1}, $|B_1 + q \cdot B_1| \geq (|q|+1)|B_1| - O(1)$ and by Theorem \ref{main}, $|B_2 + q \cdot B_2| \geq (|q|+3)|B_2| - O(1)$. We have that $B_1$ lies in a translate of some line, say $\ell$. Suppose $B_2$ lies in some distinct lines $x_1 + \ell , \ldots , x_m + \ell$ such that each $x_j + \ell$ intersects $B_2$ in at least one point. Note that $m \geq 2$ since $A$ has rank 3. For each $1 \leq j \leq m$, let $B_2^j = B_2 \cap (x_j + \ell)$. Then by the one dimensional Theorem \ref{Ru}, we have $$|B_1 + q \cdot B_2| \geq \sum_{j=1}^m |B_1 + q \cdot B_2^j| \geq m|B_1| + \sum_{j=1}^m (|B_2^j| - 1) \geq 2 |B_1| + |B_2| - 2.$$ Similarly, $|B_2 + q \cdot B_1| \geq 2|B_1| + |B_2| -2$. Combining these four inequalities with \eqref{start}, we obtain $|A+ q \cdot A| \geq (|q|+5)|A| - O(1)$. 

$(iii)$ Suppose $B_1$ and $B_2$ are both rank 1. Then the sets $x + q \cdot B_1$ and $B_1 + q \cdot x$ where $x \in B_2$ are all disjoint. Using \eqref{1}, we obtain (the extremal case being $|B_2|=2$)
$$|A+ q \cdot A| \geq (|q|+1)|A| - O(1) + 2|B_1||B_2| \geq (|q|+5)|A| - O(1).$$
\end{proof}

We now have to consider the case where $A$ is contained in four parallel lines. 

\begin{lem}
Suppose $A$ is contained in four parallel lines. Then $$|A+q \cdot A| \geq (|q|+5)|A| - O(1).$$
\end{lem}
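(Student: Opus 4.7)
Let $v$ be the common direction of the four lines and $\pi : \mathbb{R}^3 \to \mathbb{R}^2$ the projection along $v$, so that $\pi(A) = Q := \{q_1, q_2, q_3, q_4\}$ has rank $2$. Writing $B_i = A \cap \ell_i$, each sumset $B_i + q \cdot B_j$ lies on the line $\pi^{-1}(q_i + q q_j)$, which is parallel to $v$. Were the sixteen values $q_i + q q_j$ pairwise distinct, the argument of Lemma \ref{special} --- \eqref{1} on the four diagonal lines and $1$-dimensional Theorem \ref{Ru} on the twelve off-diagonal ones --- would already give $|A + q \cdot A| \geq (|q|+1)|A| + 6|A| - O(1) = (|q|+7)|A| - O(1)$, so the task reduces to controlling the loss from collisions $q_i + q q_j = q_k + q q_l$ with $(i,j) \neq (k,l)$.

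I split on whether three of the $q_i$ are collinear. If $q_1, q_2, q_3$ lie on a line $m$ and $q_4 \notin m$, then $A$ is contained in the two parallel planes $\pi^{-1}(m)$ and the plane through $\ell_4$ parallel to $\pi^{-1}(m)$, and Lemma \ref{previous} applies.

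Otherwise no three of the $q_i$ are collinear, and I analyse the collision equation. A short check using $|q| > 1$ shows that every collision must satisfy $i \neq k$ and $j \neq l$; that $\{i,k\} = \{j,l\}$ forces $q_i = q_k$; and that any overlap of size $1$ between $\{i,k\}$ and $\{j,l\}$ (including $i=j$ or $k=l$) exhibits one of the $q_i$ as an affine combination of two others, hence a collinear triple. So every surviving collision has $\{i,k\} \cup \{j,l\} = \{1,2,3,4\}$. A further check --- subtracting any two such equations, whether arising from the same partition of $\{1,2,3,4\}$ into two pairs or from different ones --- always produces either $q_i = q_k$ or another collinear triple, so at most one collision can occur.

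In that worst case, one off-diagonal line carries two sumsets $B_{i_0} + q \cdot B_{j_0}$ and $B_{k_0} + q \cdot B_{l_0}$ with $\{i_0, j_0, k_0, l_0\} = \{1,2,3,4\}$. The four diagonals still contribute $(|q|+1)|A| - O(1)$ by \eqref{1}, the ten unaffected off-diagonal lines contribute $\sum (|B_i| + |B_j| - 1) = 5|A| - O(1)$ by Theorem \ref{Ru}, and the merged line contributes at least $\max(|B_{i_0}|+|B_{j_0}|, |B_{k_0}|+|B_{l_0}|) - 1 \geq |A|/2 - O(1)$. Summing gives $|A + q \cdot A| \geq (|q| + \tfrac{13}{2})|A| - O(1)$, well beyond the required $(|q|+5)|A| - O(1)$. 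The main obstacle is the collision analysis --- a finite but somewhat tedious case check --- that rules out two simultaneous collisions under the no-three-collinear hypothesis.
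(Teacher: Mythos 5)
Your proof is correct in outline but takes a genuinely different route from the paper's. The paper never examines the affine position of the projected points $x_1,\dots,x_4$: it bounds the number of coincidences among the sixteen lines arithmetically, by working modulo $q\cdot(\mathbb{Z}^3/\ell)$ as in Lemma \ref{FD} to show $|A'+q\cdot A'|\geq 14$ (so at most two collisions, with the dropped pairs always choosable off-diagonal), and then checks that dropping two off-diagonal pairs costs at most $2|A|$, landing exactly at $(|q|+5)|A|-O(1)$. You instead split on whether three of the $q_i$ are collinear, dispatch that case by reducing to two parallel planes and invoking Lemma \ref{previous} (a reduction the paper does not use here), and in the generic case show there is at most one collision, which leaves the generous margin $(|q|+\tfrac{13}{2})|A|-O(1)$. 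Both arguments are sound; yours trades the paper's mod-$q$ count for an affine-position analysis and buys a better constant in the generic case, at the price of leaning on Lemma \ref{previous}.

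One step needs repair, though the conclusion you draw from it is true. Literally subtracting two collision equations coming from different partitions of $\{1,2,3,4\}$ does not always produce a two- or three-term relation: for instance $q_1+qq_3=q_2+qq_4$ and $q_2+qq_1=q_4+qq_3$ differ by $(1-q)q_1-2q_2+2qq_3+(1-q)q_4=0$, a four-term affine relation with zero coefficient sum, which is no contradiction, since any four planar points are affinely dependent. Here one must \emph{add} the equations to obtain $(1+q)(q_1-q_4)=0$. A cleaner way to finish: when no three of the $q_i$ are collinear, the affine dependence among the four points is unique up to scaling, so two simultaneous collisions would force their coefficient vectors---each a placement of the multiset $\{1,-1,q,-q\}$ with $\{1,-1\}$ on one part of the partition and $\{q,-q\}$ on the other---to be proportional; since $|q|>1$ the scalar must be $\pm1$, which forces the two partitions, and then the two collisions, to coincide. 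With that fix your argument is complete.
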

\begin{proof}
Suppose $A$ is contained in four parallel lines, all parallel to some line through the origin $\ell$. Then $\mathbb{Z}^3 / \ell \cong \mathbb{Z}^2$ and say $A' = \{x_1 , x_2 , x_3 , x_4\} \subset \mathbb{Z}^3 / \ell$ are the 4 cosets that intersect $A$. Note that $A'$ must be a 2 dimensional set since $A$ is 3 dimensional. We want to show $|A' + q \cdot A'| \geq 14$. By the argument of Lemma \ref{FD}, we may assume that $A'$ intersects at least 3 residue classes modulo $q \cdot (\mathbb{Z}^3 / \ell)$. If $A'$ intersects four residue classes, then $|A' + q \cdot A'| = 16$. Otherwise let $A'_1$ be the intersection of $A'$ with the residue class that contains 2 elements of $A'$. Since $A'$ is 2 dimensional, it is not an arithmetic progression, so $|A'_1 + q \cdot A'| \geq |A'_1| + |A'| = 6$. Then $|A' + q \cdot A'| = 8 + |A'_1 + q \cdot A'| \geq 14$.

Let $A = B_1 \cup \dots \cup B_4$ where $B_i = (\ell + x_i) \cap A$. Then $B_i + q \cdot B_j$ are all disjoint, if we drop at most two pairs $\{i,j\}$. We do not need to drop a pair in the form $\{i,i\}$ because an equation in the form $x_i+qx_i=x_j+qx_j$ is not possible in $A'$. That means, any set $B_i$ can appear in a dropped pair at most twice.  Then 
$$
|A+ q \cdot A| \geq \sum_{i=1}^4 |B_i + q \cdot B_i| + \sum_{i\neq j \text{not dropped}} |B_i + q\cdot B_j| \geq $$
$$\geq \sum_{i=1}^4 ((|q|+1)|B_i| - O(1)) + \sum_{i\neq j} (|B_i|+|B_j|-1) - 2|A| = (|q|+5)|A|-O(1).
$$



\end{proof}

Finally we can express the analog of Lemma \ref{f}. Note that the proof uses Theorem \ref{main} and Theorem \ref{Ba} rather than any induction, otherwise identical to the proof of Lemma \ref{f}.

\begin{lem}\label{f3}
Let $A\subset \mathbb Z^3$ of rank $3$, $B\subset A$ and suppose that the rank of $B$ is $1\leq f < 3$. Then $$|B + q \cdot A| \geq (|q|+5)|B| - O(1),$$ or $A$ is contained in two parallel hyperplanes or four parallel lines. 
\end{lem}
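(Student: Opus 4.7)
The plan is to follow the proof of Lemma \ref{f} essentially verbatim, but with $d=3$ fixed and with the ``induction hypothesis'' on $d$ replaced in the case $f=2$ by a direct appeal to Theorem \ref{main} (the already-proved rank-$2$ case) and in the case $f=1$ by Theorem \ref{Ba}. I split into the two cases $f=2$ and $f=1$ according to the rank of $B$.

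For $f=2$, write $B \subset b_0 + V$ where $V$ is the primitive $2$-dimensional sublattice parallel to the affine hull of $B$. Theorem \ref{main} applied to $B$ (viewed inside $V \cong \mathbb Z^2$) gives $|B + q\cdot B| \geq (|q|+3)|B| - O(1)$, and this sumset lies inside a single coset of $V$ in $\mathbb Z^3$, namely $(1+q)b_0 + V$. For any $x \in A$, the translate $B + qx$ lies in the coset $b_0 + qx + V$, so disjointness of $B + q\cdot B$ together with two translates $B + qx_1, B + qx_2$ reduces to finding $x_1, x_2 \in A$ whose classes in $\mathbb Z^3/V \cong \mathbb Z$ are pairwise distinct and distinct from the class of $b_0$ (multiplication by $q$ is injective on $\mathbb Z$). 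If no such pair exists then $A$ meets at most two cosets of $V$, i.e.\ $A$ lies in two parallel hyperplanes, which is the first allowed exceptional conclusion. Otherwise the three pieces are pairwise disjoint and
$$|B + q \cdot A| \geq |B + q\cdot B| + |B + qx_1| + |B + qx_2| \geq (|q|+5)|B| - O(1).$$

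For $f=1$, $B$ lies in a translate $b_0 + \ell$ of a primitive line $\ell$, and Theorem \ref{Ba} gives $|B + q\cdot B| \geq (|q|+1)|B| - O(1)$. Working in $\mathbb Z^3/\ell \cong \mathbb Z^2$ (where multiplication by $q$ is again injective), I look for four elements $x_1,\ldots,x_4 \in A$ whose classes modulo $\ell$ are pairwise distinct and all distinct from the class of $b_0$. If such a quadruple cannot be found, then $A$ meets at most four cosets of $\ell$, i.e.\ $A$ lies in four parallel lines, which is the second allowed exceptional conclusion. Otherwise the five sets $B+q\cdot B, B+qx_1,\ldots, B+qx_4$ lie in five distinct cosets of $\ell$, hence are pairwise disjoint, and
$$|B + q\cdot A| \geq (|q|+1)|B| + 4|B| - O(1) = (|q|+5)|B| - O(1).$$

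No new ideas beyond those in Lemma \ref{f} are required; the only substantive point is translating ``disjointness of the translates $B+qx$'' into ``distinctness of the classes of $qx$ in the quotient by the linear span of $B$'s direction'', and then reading off the two structural alternatives. The reason this is a separate lemma rather than a specialization of Lemma \ref{f} is that the exceptional geometric conclusions in dimension $3$ are finer (``two parallel hyperplanes or four parallel lines'' instead of the single ``$d$ parallel lines'' from Lemma \ref{f}); keeping track of which alternative arises in each of $f=1$ and $f=2$ is the only piece of bookkeeping, and that is what I expect to be the main, though minor, obstacle.
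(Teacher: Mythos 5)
Your proof is correct and follows essentially the route the paper intends (the paper omits the details, saying only that the argument is that of Lemma \ref{f} with Theorem \ref{main} and Theorem \ref{Ba} replacing the induction): for $f=2$ you get $(|q|+3)|B|-O(1)$ from the planar case of Theorem \ref{main} and two disjoint translates unless $A$ lies in two parallel hyperplanes, and for $f=1$ you get $(|q|+1)|B|-O(1)$ from Theorem \ref{Ba} and four disjoint translates unless $A$ lies in four parallel lines. Your reformulation of disjointness as distinctness of classes in the (torsion-free) quotient by the primitive sublattice spanned by $B$'s directions is just a rephrasing of the paper's affine-span argument, and the bookkeeping matching each $f$ to its exceptional alternative is exactly right.
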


\end{document}